\newtheorem{thm}{Theorem}[section] 
\newtheorem{prop}[thm]{Proposition}
\theoremstyle{definition}
\newtheorem{rem}[thm]{Remark}
\newtheorem{exmpl}[thm]{Example}
\newtheorem{algo}[thm]{Algorithm}
\newcommand\operA[2]{{\if!#2!\operatorname{#1}\else{\operatorname{#1}_{#2}^{\phantom{I}}}\fi}} 
\newcommand\Cref[1]{{Corollary~\ref{#1}}}%
\def\tr{{\operatorname{Tr}}}
\def\norm{{\operatorname{Norm}}}
\newcommand{\Trace}[1][]{\if!#1!\operatorname{Tr}\else{\operatorname{Tr}_{#1}^{\phantom{I}}}\fi} 
\long\def\forget#1\forgotten{{}} %
\def\({\left(}
\def\){\right)}
\newif\iffurther
\newif\ifXY 
\journal{TBD}
\begin{document}

\begin{frontmatter}

\title{Factoring Octonion Polynomials}
\author{Adam Chapman}
\ead{adam1chapman@yahoo.com}
\address{Department of Computer Science, Tel-Hai Academic College, Upper Galilee, 12208 Israel}

\begin{abstract}
We provide an analogue of Wedderburn's factorization method for central polynomials with coefficients in an octonion division algebra, and present an algorithm for fully factoring polynomials of degree $n$ with $n$ conjugacy classes of roots, counting multiplicities.
\end{abstract}

\begin{keyword}
Alternative Algebras, Division Algebras, Octonion Algebras, Ring of Polynomials
\MSC[2010] primary 17A75; secondary 17A45, 17A35, 17D05
\end{keyword}

\end{frontmatter}

\section{Introduction}

For fields $F$ it is well-known that a polynomial $f(x) \in F[x]$ factors into linear factors $f(x)=c(x-\lambda_1)\cdot \dots \cdot (x-\lambda_n)$ if and only if it is has $n $ roots in $F$ (counting multiplicities), in which case the roots are precisely $\lambda_1,\dots,\lambda_n$.

For associative division rings $D$ the statement was generalized by Wedderburn in \cite{Wedderburn:1921} (see also \cite{Rowen:1992} and \cite{LamLeroy:2004}).
It is important to note that by ``polynomials" over any algebra in this context, we mean \textit{central} polynomials, i.e., polynomials in which the indeterminate is assumed to commute with any element in the ring.
More precisely, the ring of (central) polynomials $D[x]$ over a division ring $D$ is defined to be the scalar extension $D \otimes_F F[x]$ of $D$ to the ring of polynomials $F[x]$ over the field $F=Z(D)$. For any element $\lambda \in D$, the substitution map $S_\lambda : D[x] \rightarrow D$ is defined by sending each polynomial $f(x)=c_n x^n+\dots+c_1 x+c_0$ to $f(\lambda)=c_n \lambda^n+\dots+c_1 \lambda+c_0$, thus placing the coefficients on the left-hand side of the indeterminate before plugging the value $\lambda$ in $x$, and therefore these polynomials are also known as ``left" polynomials.

What Wedderburn showed was that if $\lambda_1 \in D$, $f(x)\in D[x]$ and $\lambda_1$ is a root of $f(x)$, then $f(x)=g(x)(x-\lambda_1)$ for some polynomial $g(x) \in D[x]$. Then, if $\lambda_2$ is another root of $f(x)$ in $D$, $(\lambda_1-\lambda_2)\lambda_2(\lambda_1-\lambda_2)^{-1}$ is a root of $g(x)$. This is called ``Wedderburn's method" in \cite{Rowen:1992}.

Wedderburn's main motivation was the following observation:
Let $F$ be a field and $D$ a central division $F$-algebra. Suppose $f(x)$ is a monic irreducible polynomial in $F[x]$.
It is possible that $f(x)$ becomes reducible over $D[x]$.
In particular, if $f(x)$ has a root $\lambda_1$ in $D \setminus F$, then $f(x)$ decomposes in $D[x]$ as $(x-\lambda_n)(x-\lambda_{n-1})\dots (x-\lambda_1)$ where $\lambda_1,\dots,\lambda_n$ are inner conjugates in $D$.

Our goal in this work is to study the analogous situation for octonion division algebras.
Since by Kleinfeld's theorem (\cite[Chapter 7, Section 3]{SSSZ}), every nonassociative alternative division algebra is an octonion algebra (see \cite{SpringerVeldkamp} and \cite[Section 33]{BOI} for reference), this will complete the picture for alternative division algebras in full generality.
The main two results are thus the following:
\begin{enumerate}
\item An analogue of Wedderburn's method for the ring of (central) polynomials $A[x]$ over an octonion division algebra $A$.
\item An algorithm for fully factoring polynomials $f(x)$ of degree $n$ in $A[x]$ iteratively into linear factors $f(x)=((\dots(c(x-\lambda_n))\dots(x-\lambda_3))(x-\lambda_2))(x-\lambda_1)$, under the assumption there are $n$ conjugacy classes for the roots of $f(x)$, counting multiplicity.
\end{enumerate}

\section{Octonion Polynomials}

Let $A$ be an octonion division algebra with center $F$.
Recall that such an octonion algebra is a Cayley-Dickson doubling of a quaternion algebra $Q$ over $F$, i.e., $A=Q \oplus Q \ell$ as an $F$-vector space, and the multiplication is given by
$$(q+r\ell)\cdot(s+t\ell)=qr+\gamma \overline{t} r+(r \overline{s}+tq)\ell$$
for any $q,r,s,t \in Q$, where $\gamma$ is some fixed element in $F^\times$, and $z \mapsto \overline{z}$ is the canonical involution on $Q$.
The quaternion algebra $Q$ in turn admits the structure
$$Q=F \langle i,j : i^2=\alpha, j^2=\beta, i j=-j i \rangle$$
for some $\alpha,\beta \in F^\times$ when $\operatorname{char}(F) \neq 2$, in which case the canonical (symplectic) involution is given by 
$$\overline{a+bi+cj+dk}=a-bi-cj-dk,$$
and 
$$Q=F \langle i,j : i^2+i=\alpha, j^2=\beta, j i j^{-1}=i+1 \rangle$$
for some $\alpha \in F$ and $\beta \in F^\times$ when $\operatorname{char}(F)=2$, in which case the canonical involution is given by 
$$\overline{a+bi+cj+dk}=a+b+bi+cj+dk.$$
The canonical involution extends from $Q$ to $A$ by
$$\overline{q+r\ell}=\overline{q}-r\ell.$$
This involution gives rise to the trace and norm maps
$$\tr : A \rightarrow F$$
$$\norm : A \rightarrow F$$
defined by $\tr(z)=z+\overline{z}$ and $\norm(z)=\overline{z}\cdot z$.
Every element $z \in A$ then satisfies $z^2-\tr(z)\cdot z+\norm(z)=0$, and thus the algebra $A$ is a quadratic algebra.
Moreover, it is a composition algebra, for its norm form is multiplicative (see \cite[Section 33]{BOI}).
The algebra $A$ is also alternative, which means that every two elements $a,b$ in $A$ live in an associative subalgebra, and as a result, conjugation is well-defined.

Following the literature on (central) polynomials over associative division algebras, by the ring of polynomials $A[x]$ we mean the tensor product $A \otimes_F F[x]$ of $A$ and the ring of polynomials $F[x]$ over $F$.
The center of $A[x]$ is thus $F[x]$.
Therefore, every polynomial $f(x) \in A[x]$ can be written in the ``standard form" $f(x)=c_n x^n+\dots+c_1 x+c_0$ where $c_0,\dots,c_n \in A$, in which case the coefficients are placed on the left-hand side of the indeterminate.
For each $\lambda \in A$, the substitution map 
$$S_\lambda : A[x] \rightarrow A$$
$$f(x)\mapsto f(\lambda)$$
is defined by sending $f(x)=c_n x^n+\dots+c_1 x+c_0$ to $f(\lambda)=c_n \lambda^n+\dots+c_1 \lambda+c_0$.
The element $\lambda \in A$ is thus a ``root" of $f(x)$ when $f(\lambda)=0$.

We start with factoring out one linear factor given a root:
\begin{prop}
Given a polynomial $f(x) \in A[x]$ and an element $\lambda \in A$, $f(x)$ decomposes as $f(x)=g(x)(x-\lambda)$ if and only if $\lambda$ is a root of $f(x)$, in which case
$$g(x)=d_{n-1} x^{n-1}+\dots+d_1 x+d_0, \ \text{where}$$
$$d_k=c_n \lambda^{n-1-k}+c_{n-1} \lambda^{n-2-k}+\dots+c_{k+1}, \ k\in \{0,\dots,n-1\}.$$
\end{prop}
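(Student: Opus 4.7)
The plan is to establish a ``remainder theorem'' in $A[x]$: for every $f(x) \in A[x]$ and every $\lambda \in A$, one has
\[ f(x) = g(x)(x-\lambda) + f(\lambda), \]
where $g(x)$ is the polynomial with coefficients $d_k$ as stated. Once this identity is proved, the forward direction of the proposition, together with the explicit formula for $g$, follows immediately by setting $f(\lambda)=0$.

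To prove the identity, I would expand $g(x)(x-\lambda)$ into standard form, using the fact that $A[x] = A \otimes_F F[x]$ makes $x$ central. Writing $g(x)=\sum_{k=0}^{n-1} d_k x^k$, this yields
\[ g(x)(x-\lambda) = \sum_{k=0}^{n-1} d_k x^{k+1} - \sum_{k=0}^{n-1} (d_k\lambda)\, x^k. \]
Substituting $d_k=\sum_{j=k+1}^{n} c_j \lambda^{j-1-k}$ and simplifying each $d_k\lambda$ term by term, the coefficient of $x^m$ for $1\le m\le n-1$ telescopes to $c_m$, the coefficient of $x^n$ is $c_n$, and the constant coefficient works out to $-\sum_{j=1}^{n} c_j \lambda^j$. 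Subtracting from $f(x)=\sum_{k=0}^n c_k x^k$ then leaves exactly $c_0+\sum_{j=1}^n c_j \lambda^j = f(\lambda)$.

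The single delicate point is non-associativity: the above simplifications rest on identities of the form $(c_j \lambda^m)\lambda = c_j \lambda^{m+1}$. Since $A$ is alternative, Artin's theorem guarantees that any two elements of $A$ (here the coefficient $c_j$ and the element $\lambda$) generate an associative subalgebra, so all such parenthesizations collapse unambiguously and the formal Wedderburn-style manipulation goes through verbatim. This is the only place where the alternative hypothesis is needed, and it is exactly what makes the argument work for octonions as well as for the associative case.

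For the converse, suppose $f(x)=h(x)(x-\lambda)$ for some $h(x) \in A[x]$. Subtracting from the remainder identity gives $\bigl(h(x)-g(x)\bigr)(x-\lambda) = f(\lambda)$, a constant. If $h \ne g$, expanding the left-hand side in standard form shows that its leading coefficient equals that of $h-g$ (the contribution from the $-\lambda$ term lies in strictly lower degree), so its degree is at least one, contradicting that the right-hand side is constant. Hence $h=g$ and $f(\lambda)=0$, which completes the biconditional and yields the uniqueness of the quotient $g$ as a bonus.
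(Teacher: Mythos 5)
Your proof is correct, and its core is the same computation as the paper's: the only nonassociative subtlety is reassociating products of the form $(c_j\lambda^m)\lambda$, and Artin's theorem for alternative algebras (any two elements generate an associative subalgebra) resolves it exactly as you say. The paper runs the identity in the opposite direction---it assumes $f(\lambda)=0$, rewrites $f(x)$ as $\sum_{k}(c_k x^k-c_k\lambda^k)$, and factors each summand as $(c_k x^{k-1}+c_k\lambda x^{k-2}+\cdots+c_k\lambda^{k-1})(x-\lambda)$ inside the associative subalgebra generated by $c_k$ and $\lambda$---whereas you package the same telescoping as a remainder identity $f(x)=g(x)(x-\lambda)+f(\lambda)$ valid for arbitrary $\lambda$. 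Your packaging buys two things the paper's written proof leaves implicit: the converse implication (that any factorization $f(x)=h(x)(x-\lambda)$ forces $f(\lambda)=0$) and the uniqueness of the quotient $g$, both of which fall out of your degree argument applied to $(h(x)-g(x))(x-\lambda)=f(\lambda)$. The mathematics is otherwise identical, but your version is the more complete proof of the stated biconditional.
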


\begin{proof}
Consider the expression $c_n x^n+\dots+c_1 x-(c_n \lambda^n+\dots+c_1 \lambda)$.
On the one hand, since $\lambda$ is a root of $f(x)$, we have $f(\lambda)=0$, i.e., $c_n \lambda^n+\dots+c_1 \lambda+c_0=0$, and therefore $c_0=-(c_n \lambda^n+\dots+c_1 \lambda)$, which means that $f(x)=c_n x^n+\dots+c_1 x-(c_n \lambda^n+\dots+c_1 \lambda)$.

On the other hand, for each $k \in \{1,\dots,n\}$, the elements $c_k$ and $\lambda$ live in an associative subalgebra $D$ of $A$, and thus together with $x$ they live in an associative subring $D[x]$ of $A[x]$. We therefore  have
$$c_k x^k-c_k \lambda^k=c_k(x^k-\lambda^k)=c_k(x^{k-1}+\lambda x^{k-2}+\dots+\lambda^{k-1})(x-\lambda)$$
$$=(c_k x^{k-1}+c_k \lambda x^{k-2}+\dots+c_k \lambda^{k-1})(x-\lambda).$$
Hence, by rewriting $c_n x^n+\dots+c_1 x-(c_n \lambda^n+\dots+c_1 \lambda)$ as $(c_n x^n-c_n \lambda^n)+(c_{n-1} x^{n-1}-c_{n-1}\lambda^{n-1})+\dots+(c_1 x-c_1 \lambda)$, we obtain that the latter is equal to $g(x)(x-\lambda)$ where $g(x)=d_{n-1}x^{n-1}+\dots+d_1 x+d_0$ and for each $k \in \{0,\dots,n-1\}$, $d_k=c_n \lambda^{n-1-k}+c_{n-1} \lambda^{n-2-k}+\dots+c_{k+1}$.
\end{proof}

\begin{rem}
The analogue of Wedderburn's theorem \cite[Theorem 0.4]{Rowen:1992} holds true also in the octonionic case:
let $A$ be an octonion division algebra with center $F$, a field, and let $f(x)$ be a monic irreducible polynomial of degree $n$ in $F[x]$.
Suppose it has a root $\lambda_1$ in $A$.
Then, since $A$ is an octonion algebra, $\lambda$ lives inside a quaternion $F$-subalgebra $D$ of $A$. 
Now, by Wedderburn's theorem, $f(x)$ decomposes as $(x-\lambda_n)\cdot \dots \cdot (x-\lambda_1)$ in $D[x]$, where $\lambda_1,\dots,\lambda_n$ are inner conjugates in $D$.
Since $D[x]\subseteq A[x]$, the same decomposition applies also in $A[x]$.
\end{rem}

We present here the direct generalization of Wedderburn's method in the alternative case:
\begin{prop}\label{Wedderburn}
Given a polynomial $f(x)$ in $A[x]$, if $\lambda_1$ and $\lambda_2$ in $A$ are two distinct roots of $f(x)$, then $\gamma=(\lambda_1-\lambda_2) \lambda_2 (\lambda_1-\lambda_2)^{-1}$ is a root of the polynomial
$$h(x)=d_{n-1}x^{n-1}+\dots+d_1 x+d_0, \ \text{where}$$
$$d_k=\gamma^k\left( (\gamma^{-k}((\lambda_1-\lambda_2)^{-1}c_n)) \lambda_1^{n-1-k}+\dots+\gamma^{-k} ((\lambda_1-\lambda_2)^{-1}c_{k+1})\right), \ k\in \{0,\dots,n-1\}.$$
\end{prop}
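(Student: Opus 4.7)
The plan is to generalize the associative Wedderburn proof, tracking carefully where non-associativity obstructs the standard argument. First, apply the preceding proposition to write $f(x) = g(x)(x-\lambda_1)$ with $g(x) = \sum_{k=0}^{n-1} e_k x^k$ and $e_k = c_n\lambda_1^{n-1-k}+\cdots+c_{k+1}$. Since $x$ is central in $A[x]$, it commutes and associates with every element, so we may expand $g(x)(x-\lambda_1) = \sum_k e_k x^{k+1} - \sum_k (e_k\lambda_1)x^k$. Substituting $x=\lambda_2$ and writing $\lambda_1 = \lambda_2+\mu$ with $\mu = \lambda_1 - \lambda_2$, distributivity together with Artin's theorem (applied to the associative subalgebra generated by $e_k$ and $\lambda_2$) gives $(e_k\lambda_1)\lambda_2^k = e_k\lambda_2^{k+1} + (e_k\mu)\lambda_2^k$. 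The $e_k\lambda_2^{k+1}$ terms then cancel in $f(\lambda_2)=0$, leaving
\[
\sum_{k=0}^{n-1} (e_k\mu)\lambda_2^k = 0.
\]

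Next, I would use the identity $\mu\lambda_2^k = \gamma^k\mu$, which follows inductively from $\gamma\mu = \mu\lambda_2$ inside the associative subalgebra $\langle\mu,\lambda_2\rangle$ (which contains $\mu^{-1}\in F[\mu]$ and hence $\gamma = \mu\lambda_2\mu^{-1}$). The aim is to convert each summand $(e_k\mu)\lambda_2^k$ into an expression with a power of $\gamma$ on the left, $\mu$ on the right, and in between a carefully parenthesized product of $\mu^{-1}$, the $c_j$, the $\lambda_1^{j-1-k}$, and $\gamma^{-k}$. Since the $c_j$ appearing inside $e_k$ need not associate with $\mu$, $\lambda_2$, or $\gamma$, this rearrangement must be performed term by term using Artin's theorem, the alternating property of the associator in alternative algebras, and the left, middle, and right Moufang identities. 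The intended outcome is an identity of the form $h(\gamma)\cdot\mu = 0$ whose coefficients coincide exactly with the $d_k$ in the statement; since $\mu\neq 0$ is invertible in the division algebra $A$, cancellation on the right yields $h(\gamma)=0$.

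The main obstacle is verifying that the Moufang and alternativity identities generate precisely the nested parenthesization displayed in the definition of $d_k$, rather than an equivalent-looking expression modulo additional associator corrections. The specific ordering — outermost $\gamma^k$, then $\gamma^{-k}(\mu^{-1}c_j)$, then right multiplication by $\lambda_1^{j-1-k}$ — is forced by which pairs of factors happen to sit in an associative subalgebra at each manipulation step, and checking that no spurious associator terms survive this bookkeeping is the technical crux of the argument.
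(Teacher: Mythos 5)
Your reduction to $\sum_{k}(e_k\mu)\lambda_2^k=0$ (with $\mu=\lambda_1-\lambda_2$) is valid: $x$ lies in the nucleus of $A[x]=A\otimes_F F[x]$, so the expansion of $g(x)(x-\lambda_1)$ is legitimate, and each re-association you perform there happens inside a two-generated, hence associative, subalgebra. But the proof stops exactly where the content of the proposition begins: the precise nested parenthesization of the $d_k$ is never derived, only announced as ``the intended outcome,'' and you yourself defer its verification as ``the technical crux.'' Worse, the plan as stated cannot succeed. You propose to convert each summand $(e_k\mu)\lambda_2^k$ into the $k$-th term of $h(\gamma)\mu$; already at $k=0$ this would force $e_0\mu=d_0\mu$, i.e.\ $c_n\lambda_1^{n-1}+\dots+c_1=(\mu^{-1}c_n)\lambda_1^{n-1}+\dots+\mu^{-1}c_1$, which fails in general. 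The obstruction is structural: as an identity in $A$ (using no root hypothesis) one has $\sum_k(e_k\mu)\lambda_2^k=f(\lambda_1)-f(\lambda_2)$, whereas $h(\gamma)\mu=\mu^{-1}\bigl(f(\lambda_1)-f(\lambda_2)\bigr)$; the two expressions differ by a left factor of $\mu^{-1}$, so no amount of Moufang bookkeeping can transform one into the other. The one-sided target $h(\gamma)\mu=0$ is the wrong normal form --- the $\mu^{-1}$ sitting to the \emph{left} of each $c_j$ inside $d_k$, and the two-sided sandwich $\gamma^k(\cdot)\gamma^k$, are fingerprints of the middle Moufang identity $(rs)(tr)=r(st)r$, which produces two-sided conjugation rather than right multiplication.

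The paper's argument avoids the detour through $f(x)=g(x)(x-\lambda_1)$ altogether. Subtracting the two root equations gives $\sum_jc_j(\lambda_1^j-\lambda_2^j)=0$ directly. Inside the associative subalgebra generated by $\lambda_1$ and $\lambda_2$ one has $\lambda_1^j-\lambda_2^j=\bigl(\sum_{i=0}^{j-1}\lambda_1^{j-1-i}\gamma^i\bigr)\mu$, so each term has the shape $(\mu c'_j)(t_j\mu)$ with $c'_j=\mu^{-1}c_j$, and the middle Moufang identity turns it into $\mu(c'_jt_j)\mu$. A second application of the same identity, $\gamma^i\bigl((\gamma^{-i}c'_j)\lambda_1^{a}\bigr)\gamma^i=\bigl(\gamma^i(\gamma^{-i}c'_j)\bigr)(\lambda_1^{a}\gamma^i)=c'_j(\lambda_1^{a}\gamma^i)$, re-associates $c'_jt_j$ term by term; regrouping by the power of $\gamma$ yields $\mu\,h(\gamma)\,\mu=0$ and hence $h(\gamma)=0$. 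If you wish to salvage your route, you must first conclude $\sum_k(e_k\mu)\lambda_2^k=0$ from the hypothesis and then, separately, prove the identity $\mu h(\gamma)\mu=\sum_jc_j(\lambda_1^j-\lambda_2^j)$ --- at which point the $e_k$ play no role and you have reproduced the paper's computation.
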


\begin{proof}
Since both $\lambda_1$ and $\lambda_2$ are roots of $f(x)$,
we have
$$c_n \lambda_1^n+\dots+c_1 \lambda_1+c_0=0, \ \text{and}$$
$$c_n \lambda_2^n+\dots+c_1 \lambda_2+c_0=0.$$
Subtracting the second from the first gives
$$c_n(\lambda_1^n-\lambda_2^n)+\dots+c_1(\lambda_1-\lambda_2)=0.$$
Now, for each $k \in \{1,\dots,n\}$,
$$\lambda_1^k-\lambda_2^k=(\lambda_1^{k-1}+\lambda^{k-2}\gamma+\dots+\gamma^{k-1})(\lambda_1-\lambda_2)$$
where $\gamma=(\lambda_1-\lambda_2)\lambda_2(\lambda_1-\lambda_2)^{-1}$.
For each $k$, write $c'_k=(\lambda_1-\lambda_2)^{-1}c_k$, and then 
$$c_k(\lambda_1^k-\lambda_2^k)=(\lambda_1-\lambda_2)(c_k'(\lambda_1^{k-1}+\lambda^{k-2}\gamma+\dots+\gamma^{k-1}))(\lambda_1-\lambda_2)^{-1}$$
by the Moufang identity $(r s)(t r)=r(st)r$.
It remains to note that 
$$c_k'(\lambda_1^{k-1}+\lambda^{k-2}\gamma+\dots+\gamma^{k-1})=c_k'\lambda_1^{k-1}+\gamma((\gamma^{-1}c_{k-1}')\lambda_1^{k-2})\gamma+\dots$$
$$+\gamma^{k-2}((\gamma^{2-k}c_1')\lambda_1)\gamma^{k-2}+\gamma^{k-1}(\gamma^{1-k}c_0')\gamma^{k-1}$$
and then the sum $c_n(\lambda_1^n-\lambda_2^n)+\dots+c_1(\lambda_1-\lambda_2)$ turns into
$$(\lambda_1-\lambda_2)\left(d_{n-1}\gamma^{n-1}+\dots+d_1 \gamma+d_0 \right)(\lambda_1-\lambda_2)^{-1},$$
which means $d_{n-1}\gamma^{n-1}+\dots+d_1 \gamma+d_0=0$.
\end{proof}

Note that when all the coefficients $c_n,\dots,c_0$ and the roots $\lambda_1$ and $\lambda_2$ belong to an associative subalgebra, the polynomial $h(x)$ from Proposition \ref{Wedderburn} coincides with the polynomial $g(x)$ for which $f(x)=g(x)(x-\lambda_1)$. However, this is not true in general. 

\begin{exmpl}
Consider the real octonion algebra $\mathbb{O}$ with standard generators $i,j,\ell$, and the polynomial
$f(x)=i x^2+j x+\ell$.
It has two roots $\lambda_1=\frac{1}{2}(1+ij+i\ell+j\ell)$ and $\lambda_2=\frac{1}{2}(-1+ij-i\ell+j\ell)$.
Then $\lambda_1-\lambda_2=1+i\ell$, and $\gamma=(\lambda_1-\lambda_2) \lambda_2 (\lambda_1-\lambda_2)^{-1}=\frac{1}{2}(1+i\ell)\cdot \frac{1}{2}(-1+ij-i\ell+j\ell) \cdot (1-i \ell)=\frac{1}{2}(1+i\ell)(-1+j\ell)=\frac{1}{2}(-1-ij-i\ell+j\ell)$.
The polynomial $f(x)$ decomposes as $g(x)(x-\lambda_1)$ where $g(x)=ix+(j+i\lambda_1)=ix+\frac{1}{2}(i+j-\ell-(ij)\ell)$, whose only root is not $\gamma$.
On the other hand, $$h(x)=(\lambda_1-\lambda_2)^{-1} i x+((\lambda_1-\lambda_2)^{-1} i)\lambda_1+(\lambda_1-\lambda_2)^{-1}j$$
$$=\frac{1}{2}(i-\ell) x+\frac{1}{2}(-j-\ell)+\frac{1}{2}(j+(ij)\ell),$$
and then indeed $h(\gamma)=\frac{1}{2}(\ell-(ij)\ell)+\frac{1}{2}(-j-\ell)+\frac{1}{2}(j+(ij)\ell)=0$, and therefore $\gamma$ is a root of $h(x)$.
\end{exmpl}

So how do we keep factoring $g(x)$?
The answer is in the following section and involves the companion polynomial.

\section{The Companion Polynomial}

In \cite{Chapman:2020} the companion polynomial $C_f(x)$ was defined for any polynomial $f(x)=c_n x^n+\dots+c_1 x+c_0$ in the following way:
$$C_f(x)=b_{2n} x^{2n}+b_{2n-1}x^{2n-1}+\dots+b_1x+b_0$$
where for each $k \in \{0,\dots,2n\}$, if $k$ is odd then $b_k$ is equal to the sum of all the elements $\tr(\overline{c_i}c_j)$ with $0\leq i < j \leq n$ and $i+j=k$, and if $k=2m$ is even then $b_k$ is equal to the sum of all the elements $\tr(\overline{c_i}c_j)$ with $0\leq i < j \leq n$ and $i+j=k$ plus the element $\norm(c_m)$. 

\begin{rem}
Since $\tr(\overline{c_i}c_j)=\overline{c_i}c_j+\overline{c_j}c_i$ and $\norm(c_m)=\overline{c_m}c_m$, by a straight-forward computation (using the fact that $x$ is central) one sees that in fact 
$$C_f(x)=\overline{f(x)}f(x),$$
where $\overline{f(x)}=\overline{c_n} x^n+\dots+\overline{c_1}x+\overline{c_0}$.
\end{rem}

In \cite{Chapman:2020} it was proven that the roots of $C_f(x)$ coincide with the conjugacy classes of the roots of $f(x)$.
This leads us to the following fact:

\begin{thm}\label{factor}
A polynomial $f(x)$ of degree $n$ in $A[x]$ factors into linear factors
$$f(x)=((\dots(c(x-\lambda_n))\dots(x-\lambda_3))(x-\lambda_2))(x-\lambda_1)$$
 if and only if $C_f(x)$ decomposes as $C_f(x)=\norm(c)\cdot \prod_{k=1}^n (x^2-\tr(\gamma_k)x+\norm(\gamma_k))$ for some $\gamma_1,\dots,\gamma_n \in A$, in which case $\lambda_1,\dots,\lambda_n$ are inner conjugates of $\gamma_1,\dots,\gamma_n$, and represent the conjugacy classes of the roots of $f(x)$.
\end{thm}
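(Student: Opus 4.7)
The plan is to leverage the identity $C_f(x) = \overline{f(x)}f(x)$ from the preceding remark together with the multiplicativity of the octonion norm. Since $A$ is a composition algebra, $\norm(pq) = \norm(p)\norm(q)$ holds on $A$, and the same identity survives scalar extension to $A[x] = A \otimes_F F[x]$ because the norm still takes values in the central subring $F[x]$; moreover $\norm(x - \lambda) = (x-\overline{\lambda})(x-\lambda) = x^2 - \tr(\lambda) x + \norm(\lambda)$ for every $\lambda \in A$. For the forward direction, assuming $f(x) = ((\dots(c(x - \lambda_n))\dots)(x - \lambda_2))(x - \lambda_1)$, iterated application of $\norm$ to the two-factor products immediately yields
$$C_f(x) = \norm(c)\prod_{k=1}^n \bigl(x^2 - \tr(\lambda_k)x + \norm(\lambda_k)\bigr),$$
so the conclusion holds with $\gamma_k = \lambda_k$.

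For the reverse direction I proceed by induction on $n$. Assuming $C_f(x) = \norm(c)\prod_{k=1}^n (x^2 - \tr(\gamma_k)x + \norm(\gamma_k))$, the element $\gamma_1$ is a root of $C_f$, so the main result of \cite{Chapman:2020} produces a root $\lambda_1$ of $f(x)$ in the conjugacy class of $\gamma_1$; in particular $\tr(\lambda_1) = \tr(\gamma_1)$ and $\norm(\lambda_1) = \norm(\gamma_1)$. The first proposition of this section then gives $f(x) = g(x)(x - \lambda_1)$ with $\deg g = n - 1$ and leading coefficient still $c$. Multiplicativity of $\norm$ yields $C_f(x) = C_g(x) \cdot (x^2 - \tr(\lambda_1)x + \norm(\lambda_1))$, and cancelling this quadratic factor in the UFD $F[x]$ produces $C_g(x) = \norm(c)\prod_{k=2}^n (x^2 - \tr(\gamma_k)x + \norm(\gamma_k))$. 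The inductive hypothesis factors $g$ in the prescribed form, and appending $(x - \lambda_1)$ finishes the argument.

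The main obstacle I anticipate is the non-associative manipulation hidden inside $\overline{f(x)} f(x) = \overline{(x - \lambda_1)}\,\overline{g(x)}\,g(x)(x - \lambda_1)$: reassociating this four-fold product into $\norm(g(x)) \cdot (x^2 - \tr(\lambda_1)x + \norm(\lambda_1))$ cannot be done by alternativity alone. The cleanest route is to invoke the composition-algebra identity $\norm(pq) = \norm(p)\norm(q)$ as a single black box applied to the two-factor product $g(x)(x - \lambda_1)$ in $A[x]$, and then to use centrality of $\norm(g(x)) \in F[x]$ to freely reorder the resulting scalar. A secondary bookkeeping point is that the hypothesis of $n$ conjugacy classes of roots counting multiplicity is precisely what guarantees, at every stage of the induction, the existence of a root $\lambda_k$ conjugate to $\gamma_k$; without it the induction could stall even though the factorization of $C_f$ takes the required shape.
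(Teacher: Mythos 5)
Your proof is correct and follows essentially the same route as the paper: both directions peel off linear factors using the identity $C_{g\cdot(x-\lambda)}(x)=C_g(x)\cdot(x^2-\tr(\lambda)x+\norm(\lambda))$ and, for the converse, invoke the root--conjugacy-class correspondence of \cite{Chapman:2020} to produce at each inductive step a root $\lambda_k$ conjugate to $\gamma_k$. Your justification of that key identity via multiplicativity of the norm on $A\otimes_F F[x]$ is in fact a cleaner account of the reassociation $(x-\overline{\lambda_1})\,\overline{g(x)}\,g(x)(x-\lambda_1)=C_g(x)(x^2-\tr(\lambda_1)x+\norm(\lambda_1))$ that the paper asserts directly.
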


\begin{proof}
Recall that $\lambda_1$ is a root of $f(x)$ if and only if $f(x)=g(x)(x-\lambda_1)$.
If that holds then
$C_f=\overline{f(x)}f(x)=(x-\overline{\lambda_1}) \overline{g(x)}g(x)(x-\lambda_1)=C_g(x)(x^2-\tr(\lambda_1)x+\norm(\lambda_1))$.
Since the roots of $C_g(x)$ are the conjugacy classes of the roots of $g(x)$,  we conclude that the conjugacy classes of the $f(x)$ are the union of the conjugacy classes of the roots of $g(x)$ and the conjugacy class of $\lambda_1$.

Thus, by induction, if $f(x)$ decomposes completely as the product of linear factors $f(x)=((\dots(c(x-\lambda_n))\dots(x-\lambda_3))(x-\lambda_2))(x-\lambda_1)$, then $C_f=\norm(c)\cdot \prod_{k=1}^n (x^2-\tr(\lambda_k)x+\norm(\lambda_k))$, the roots of $C_f$ are both the conjugacy classes of $\lambda_1,\dots,\lambda_n$, and the conjugacy classes of the roots of $f(x)$.
Consequently, the roots of $f(x)$ are the conjugacy classes of  $\lambda_1,\dots,\lambda_n$.

In the opposite direction, suppose $C_f(x)=\norm(c)\cdot \prod_{k=1}^n (x^2-\tr(\gamma_k)x+\norm(\gamma_k))$ for some $\gamma_1,\dots,\gamma_n \in A$.
Take $\lambda_1$ to be a root of $f(x)$ in the conjugacy class of $\gamma_1$.
Since for a root $\lambda_1$ of $f(x)$, $f(x)=g(x)(x-\lambda_1)$ and $C_f(x)=C_g(x)(x^2-\tr(\lambda_1)x+\norm(\lambda_1))$, we conclude $C_g(x)=\norm(c)\cdot \prod_{k=2}^n (x^2-\tr(\gamma_k)x+\norm(\gamma_k))$, and thus for each $k\in \{2,\dots,n\}$, there is a root of $g(x)$ in the conjugacy class of $\gamma_k$. Take in particular $\lambda_2$ to be the root of $g(x)$ in the conjugacy class of $\gamma_2$, and continue inductively.
\end{proof}

\begin{rem}
The analogous statement clearly holds true for quaternion algebras as well, i.e., if $Q$ is a quaternion algebra over $F$ and $f(x)$ is a polynomial in $Q[x]$, then $f(x)$ decomposes into linear factors as $f(x)=c(x-\lambda_n)\dots(x-\lambda_1)$ if and only if its companion polynomial $C_f(x)=\overline{f(x)}\cdot f(x)$ decomposes as $C_f(x)=\norm(c)\cdot \prod_{k=1}^n (x^2-\tr(\gamma_k)x+\norm(\gamma_k))$ for some $\gamma_1,\dots,\gamma_n \in Q$.
The argument is simpler here, because the algebra is associative: if $f(x)$ decomposes as $c(x-\lambda_n)\dots(x-\lambda_1)$, then $C_f(x)$ is by a straight-forward computation $\norm(c)\cdot \prod_{k=1}^n (x^2-\tr(\lambda_k)x+\norm(\lambda_k))$; and in the opposite direction, if $C_f(x)=\overline{f(x)}\cdot f(x)$ decomposes as $C_f(x)=\norm(c)\cdot \prod_{k=1}^n (x^2-\tr(\gamma_k)x+\norm(\gamma_k))$ for some $\gamma_1,\dots,\gamma_n \in Q$, then by \cite[Remark 3.5 and Theorem 3.6]{ChapmanMachen:2017}, $f(x)$ has roots $\lambda_1,\dots,\lambda_n$ where each $\lambda_k$ is in the same conjugacy class as $\gamma_k$.
\end{rem}

Note that in the decomposition $C_f(x)=\norm(c)\cdot \prod_{k=1}^n (x^2-\tr(\gamma_k)x+\norm(\gamma_k))$, each quadratic factor is central, and thus the product is well defined because it takes place in $F[x]$. The proof of Theorem \ref{factor} gives rise to an algorithm for factoring completely decomposable polynomials in $A[x]$:
\begin{algo}
Given a polynomial $f(x) \in A[x]$, in order to compute its factorization, 
\begin{enumerate}
\item Compute the companion polynomial $C_f(x)$.
\item Decompose $C_f(x)$ as $C_f=\norm(c)\cdot \prod_{k=1}^n (x^2-\tr(\gamma_k)x+\norm(\gamma_k))$ (on the way, make sure this decomposition exists, because it is necessary).
\item Take $\gamma_1$ and find a root $\lambda_1$ of the same conjugacy class for $f(x)$ by reducing the equation $f(\lambda_1)=0$ into a linear equation using the equality $\lambda_1^2-\tr(\gamma_1) \lambda_1+\norm(\gamma_1)\lambda_1=0$ (in a similar way to \cite[Theorem 3.4 and Algorithm 3.5]{Chapman:2020}).
\item Decompose $f(x)=f_2(x)(x-\lambda_1)$.
\item For $k\in \{2,\dots,n-1\}$, take $\gamma_k$ and find a root $\lambda_k$ of the same conjugacy class for $f_k(x)$ by reducing $f_k(x)=0$ into a linear equation, and decompose $f_k(x)=f_{k+1}(x)(x-\lambda_k)$.
\item We end up with a linear $f_n(x)$ that factors as $f_n(x)=c(x-\lambda_n)$. Then the obtained $\lambda_n,\dots,\lambda_1$ satisfy $f(x)=((\dots(c(x-\lambda_n))\dots(x-\lambda_3))(x-\lambda_2))(x-\lambda_1)$.
\end{enumerate}
\end{algo}

\begin{exmpl}
Consider the real octonion algebra $\mathbb{O}$ and the polynomial $f(x)=\ell x^3+i\ell x^2+\ell x+i\ell$.
Its companion polynomial is $C_f(x)=x^6+3x^4+3x^2+1=(x^2+1)^3$.
Therefore the roots of $f(x)$ are of trace $0$ and norm $1$.
They include $j$.
Take $\lambda_1=j$.
Then $f(x)=(\ell x^2+(\ell j+i\ell)x+(\ell j^2+i\ell j+\ell))(x-j)$, $g(x)=\ell x^2+(i-j)\ell x-(i j)\ell$.
Hence, $C_g(x)=(x^2+1)^2$, so its roots are still of trace $0$ and norm $1$.
Suppose $\lambda_2$ is a root of $g(x)$, then $\lambda_2^2=-1$, and so $0=g(\lambda_2)=-\ell+((i-j)\ell)\lambda_2-(ij)\ell$, which means $(1+ij)\ell=((i-j)\ell)\lambda_2$, and so $\lambda_2=\frac{1}{2}((1+ij)\ell)((i-j)\ell)=\frac{1}{2}((j-i)(1+ij))\ell^2=-j$.
Therefore, $g(x)=(\ell x+(i-j)\ell+\ell\cdot (-j))(x+j)=(\ell x+i\ell)(x+j)=(\ell(x+i))(x+j)$.
The obtained factorization of $f(x)$ is thus
$$f(x)=((\ell(x+i))(x+j))(x-j).$$
Note that in this very specific polynomial, we have $f(x)=\ell(x+i)(x^2-1)$, and thus there are plenty other possible ways of factoring it.
\end{exmpl}


\bibliographystyle{abbrv}

\end{document}